\def\anonymous{0}
\newcommand{\bbR}{\mathbb{R}}
\newcommand{\bbM}{\mathbb{M}}
\newcommand{\dif}{\textrm{d}}
\DeclareMathOperator{\E}{E}
\DeclareMathOperator{\GL}{GL}
\DeclareMathOperator{\im}{im}
\let\act\vartriangleright
\begin{document}

\title{Universal Collection of Euclidean Invariants between Pairs of Position-Orientations}
\titlerunning{Universal Collection of \(\E(3)\) Invariants on \(\bbM_3 \times \bbM_3\)}

{\if\anonymous0
    \author{
        Gijs Bellaard\and
        Bart M. N. Smets\and
        Remco Duits
    }
    \authorrunning{G. Bellaard et al.}

    \institute{
        CASA \& EAISI, Eindhoven University of Technology \\
        \email{\{g.bellaard,b.m.n.smets,r.duits\}@tue.nl}
    }
\else
    \author{Anonymous Author(s)}
    \authorrunning{Anonymous Author(s)}
    \institute{Anonymous Institute(s)}
\fi}

\maketitle 

\begin{abstract} 
Euclidean \(\E(3) := \bbR^3 \rtimes \text{O}(3)\) equivariant neural networks that employ scalar fields on position-orientation space \(\bbM_3 := \bbR^3 \times S^2\) have been effectively applied to tasks such as predicting molecular dynamics and properties.
To perform equivariant convolutional-like operations in these architectures one needs Euclidean invariant kernels \(k : \bbM_3 \times \bbM_3 \to \bbR\).
In practice, a handcrafted collection of invariants is selected, and this collection is then fed into multilayer perceptrons to parametrize the kernels.
We rigorously describe an optimal collection of \(4\) smooth scalar invariants on the whole of \(\bbM_3 \times \bbM_3\).
With optimal we mean that the collection is \emph{independent} and \emph{universal}, meaning that \emph{all} invariants are pertinent, and \emph{any} invariant kernel is a function of them.
We evaluate two collections of invariants, one universal and one not, using the PONITA neural network architecture.
Our experiments show that using a collection of invariants that is universal positively impacts the accuracy of PONITA significantly.

\keywords{Position-Orientation Space \and Euclidean Group \and Differential Geometry \and Lie Theory \and Invariant \and Equivariant \and Machine Learning.}

\end{abstract}

\section{Introduction}

Neural networks that are equivariant with respect to the Euclidean group \(\E(3) := \bbR^3 \rtimes \text{O}(3)\) -- that being the Lie group of all translations, rotations, and reflections of \(\bbR^3\) -- have been successfully applied to various tasks: 
1) point-cloud classification \cite{fuchs2020se3}, where an object and its rigid transformations should be classified as the same, 
2) molecular dynamics/properties \cite{bekkers2024fast,fuchs2020se3,gasteiger2021gemnet,liu2022spherical}, as the (non-relativistic) physical laws are invariant under Euclidean transformations, 
and 3) hemodynamics \cite{suk2024mesh,suk2024labgatr}, where, for example, wall shear stress vector prediction should be equivariant w.r.t. \(\E(3)\).

There are many ways to create Euclidean equivariant neural networks, but here we focus on one specific kind: models that use scalar fields \(f : \bbM_3 \to \bbR\) on position-orientation space \(\bbM_3 := \bbR^3 \times S^2\) as their feature maps.
The choice is well supported since: 1) in \cite[Sec.3]{gasteiger2021gemnet} it is shown theoretically that scalar fields \(\bbM_3\) provide the same expressivity as more mathematically involved networks that use \(\rho : \text{O}(3) \to \GL(V)\) representation fields \(f : \bbR^3 \to V\) \cite{anderson2019cormorant,fuchs2020se3,thomas2018tensor,weiler2018steerable}, and 2) the recent state-of-the-art results in \cite{bekkers2024fast} show that scalar fields on \(\bbM_3\) can be equally expressive practically.

Suppose we have a scalar field \(f : \bbM_3 \to \bbR\) as input and want to construct a linear operator to process it, as is common in neural networks.
Consider an integral operator \(\Phi\) of the form
\begin{equation}
    (\Phi f)(p_1) = \int_{\bbM_3} k(p_1,p_2) f(p_2) \dif \mu(p_2),
\end{equation}
where \(k : \bbM_3 \times \bbM_3 \to \bbR\) is a so-called kernel, and \(\mu\) is a (\(\E(3)\) invariant) measure on position-orientation space \(\bbM_3\).
To make the mapping \(\Phi\) Euclidean equivariant it is sufficient to enforce the following invariance constraint on the kernel \(k\):
\begin{equation}
    k(g \act p_1, g \act p_2) = k(p_1, p_2) \text{ for all } p_1, p_2 \in \bbM_3, g \in \E(3).
\end{equation}
Here \(\act\) is the standard action of $\E(3)$ on a position and orientation in \(\bbM_3\) and is formally defined later in \Cref{sec:prelim}.

Hence, to create \(\E(3)\) equivariant neural networks on \(\bbM_3\) we are motivated to study scalar \(\E(3)\) invariants \(\iota : \bbM_3 \times \bbM_3 \to \bbR\), that being functions with the property that \(\iota(g \act p_1, g \act p_2) = \iota(p_1, p_2)\) for all \(p_1, p_2 \in \bbM_3\) and \(g \in \E(3)\).

Consider any collection of scalar invariants \(\iota_1, \dots, \iota_n : \bbM_3 \times \bbM_3 \to \bbR\).
We can create a new invariant \(\iota'\) easily by considering any function \(h : \bbR^n \to \bbR\) and defining \(\iota' = h(\iota_1, \dots, \iota_n)\).
This observation has an immediate application in our neural networks: we can decide to parameterize the kernels \(k\) by, for example, a multi-layer perceptron \(\text{MLP}_\theta : \bbR^n \to \bbR\) with (trainable) parameters \(\theta\), and inserting a predesigned collection of \(n\) invariants: \(k = \text{MLP}_{\theta}(\iota_1, \dots, \iota_n)\).
This motivates looking into what an ``optimal'' collection of scalar invariants would be, so that we can construct networks that are maximally expressive and efficient.

Suppose we have a collection of invariants where one of them is a function of the others.
If this happens we say the collection of invariants is \emph{dependent} (\Cref{def:dependent}).
A dependent collection is not ``optimal'' in the sense that we could remove the dependent invariant and (theoretically) lose no expressiveness.

On the other hand, suppose we have a collection of invariants for which we know that any other invariant one can think of is a function of them.
We say such a collection of invariants is \emph{universal} (\Cref{def:universality_representer}).
A universal collection of invariants is ``optimal'' in the sense that there is no reason to add another invariant because we (theoretically) gain no expressiveness.

This motivates our quest for a collection of scalar invariants that is both independent and universal, which will serve as a type of basis for invariant kernels.

In \cite{bekkers2024fast} Bekkers et al. present PONITA, an \(\E(3)\) equivariant architecture that utilizes scalar fields on \(\bbM_3\).
This architecture achieves the previously mentioned state-of-the-art results on two molecular datasets: rMD17 \cite{christensen2020role}, where the task is to predict molecular dynamics, and QM9 
\cite{ramakrishnan2014quantum,ruddigkeit2012enumeration}, where the goal is to predict chemical properties of various molecules.
PONITA utilizes the following collection of three \(\E(3)\) invariants \(\iota_i : \bbM_3 \times \bbM_3 \to \bbR\):
\begin{equation} \label{eq:invariants_ponita}
\begin{split}
    \iota_1(p_1, p_2) &= (x_2 - x_1) \cdot n_1, \\
    \iota_2(p_1, p_2) &= \|(x_2 - x_1) - \iota_1(p_1, p_2) n_1 \|, \\
    \iota_3(p_1, p_2) &= \arccos(n_1 \cdot n_2),
\end{split}
\end{equation}
where \(p_1 = (x_1, n_1)\), \(p_2 = (x_2, n_2) \in \bbM_3 = \bbR^3 \times S^2\). 

However, we noticed that these invariants are not universal, see \Cref{rem:bekkers_not_complete}, meaning that not all kernels can be written in terms of this collection of invariants, which could impact the accuracy of PONITA negatively in certain applications.
Therefore we theoretically investigate in detail what an independent and universal collection of scalar \(\E(3)\) invariants on \(\bbM_3 \times \bbM_3\) would be, and if such a collection can be used to improve the accuracy of PONITA in practice.

\subsubsection{Contributions}
In \Cref{def:our_invariants} we give our own collection of four smooth \(\E(3)\) invariants \(\iota_i : \bbM_3 \times \bbM_3 \to \bbR\), of which we show in \Cref{res:our_invariants_are_cool} that it is both independent and universal.
In \Cref{sec:experiments} we show that using a universal collection of invariants has a significant positive impact on the accuracy of PONITA when predicting molecular properties.

\subsubsection{Outline}
In \Cref{sec:prelim} we briefly define our main objects of study; that being the position-orientation space \(\bbM_3\), the Euclidean group \(\E(3)\), and invariants.
In \Cref{sec:universality} we define universality and the existence of a \emph{representer} (\Cref{def:universality_representer}), and show that the latter implies the former.
In \Cref{sec:dependence} we define (in)dependence and show that invariants that form a submersion are independent.
In \Cref{sec:our_invariants} we present a collection of four smooth scalar invariants, along with proofs of its independence and universality.
In \Cref{sec:experiments} we evaluate our universal collection of invariants using the PONITA architecture.
In \Cref{sec:conclusion} we conclude the paper.

\section{Preliminaries} \label{sec:prelim}

Next we briefly define our central concepts: the position-orientation space \(\bbM_3\), the Euclidean group \(\E(3)\), how \(\E(3)\) acts on \(\bbM_3\), and what an invariant is.

\begin{definition}[Position-Orientation Space] \label{def:pos_ori_space}
    The smooth manifold of three-dimensional position-orientations is
    \( \bbM_3 = \{ (x,n) \in \bbR^3 \times \bbR^3 \mid \|n\|=1 \}\).
    The tangent space at a point \(p = (x,n) \in \bbM_3\) is 
    \( T_p \bbM_3 = \{ (\dot x, \dot n) \in \bbR^3 \times \bbR^3 \mid \dot n \cdot n = 0 \}\).
\end{definition}

\begin{definition}[Euclidean Group]
    The Euclidean group is
    \(\E(3) = \{ (t,Q) \in \bbR^3 \times \bbR^{3 \times 3} \mid  Q^\top Q = I\} \).
    The group product is
    \( (t_2, Q_2) \cdot (t_1, Q_1) = (t_2 + Q_2 t_1, Q_2 Q_1) \).
    The identity element is \(e = (0,I)\).
\end{definition}

We define the action \(\act : \E(3) \times \bbM_3 \to \bbM_3\) of the Euclidean group on position-orientation space by 
\( (t,Q) \act (x, n) = (t + Q x, Q n),\) 
where \((t,Q) \in \E(3)\) and \((x,n) \in \bbM_3\).
We extend this action to pairs of position-orientations \((p_1, p_2) \in \bbM_3 \times \bbM_3\) by letting the group element act on both, that is \(g \act (p_1, p_2) := (g \act p_1, g \act p_2)\).
From here on out we will drop the group action symbol \(\act\) for conciseness.

The following defines what an invariant is for arbitrary sets \(X\) and groups \(G\). 
While this level of generality exceeds our needs -- since our focus will be on \(X=\bbM_3×\times \bbM_3\) and \(G=\E(3)\) -- we present the full definition here for the sake of completeness.

\begin{definition}[Invariant]
    Let \(X\) be a set and \(G\) a group acting on it.
    Define \(\sim\) as the equivalence relation indicating if two elements are in the same \emph{orbit}, that is \(x \sim x' \Leftrightarrow \exists g \in G : g x = x'\).
    Let \(Y\) be any set.
    An \emph{invariant} \(\iota : X \to Y\) is a mapping such that \(x \sim x' \Rightarrow \iota(x) = \iota(x')\).
\end{definition}


\section{Universality} \label{sec:universality}

Here, we will define what universality is and clarify what it means for an invariant to have a representer. 
We will demonstrate in \Cref{res:representer_universal_complete} that the existence of a representer implies universality, a result we will later use to prove that our collection of invariants \eqref{eq:invariants_ours} is universal.

\begin{definition}[Universality and Representer] \label{def:universality_representer}
    \begin{itemize}
        \item An invariant \(\iota : X \to Y\) is \emph{universal} if any other invariant \(j : X \to Z\) can be written as a function of it, that is there exists \(F : \im \iota \to Z\) such that \(j = F \circ \iota\).
        \item A \emph{representer} \(\varphi\) for an invariant \(\iota : X \to Y \) is a map \(\varphi : \im \iota \to X\) such that \(\varphi(\iota(x)) \sim x\) for all \(x \in X\).
    \end{itemize}
\end{definition}

In other words, a representer \(\varphi\) takes in the value \(\iota(x)\) of the invariant \(\iota\) of a \(x \in X\) and returns a \(x' = \varphi(\iota(x))\) that is in the orbit of \(x\).

\begin{lemma} \label{res:representer_universal_complete}
    Let \(\iota\) be an invariant. 
    If \(\iota\) has a representer then \(\iota\) is universal.
\end{lemma}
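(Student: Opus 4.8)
The plan is to construct the desired factorization explicitly from the representer. Given the representer \(\varphi : \im \iota \to X\) and an arbitrary invariant \(j : X \to Z\), the natural candidate is to set \(F := j \circ \varphi : \im \iota \to Z\). The intuition is that \(\varphi\) lets us travel from a value \(\iota(x)\) back to \emph{some} point in the orbit of \(x\), and since \(j\) is constant on orbits, applying \(j\) there should recover \(j(x)\) irrespective of which orbit representative \(\varphi\) happens to return.

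To carry this out, I would verify \(F \circ \iota = j\) pointwise. Fix \(x \in X\). By the defining property of the representer, \(\varphi(\iota(x)) \sim x\), i.e.\ \(\varphi(\iota(x))\) and \(x\) lie in the same orbit. Because \(j\) is an invariant, it takes equal values on orbit-equivalent points, so \(j(\varphi(\iota(x))) = j(x)\). Unwinding the definition of \(F\), this reads \(F(\iota(x)) = j(x)\), which is exactly the claim \(j = F \circ \iota\). Since \(j\) was an arbitrary invariant, this establishes universality of \(\iota\).

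Before concluding, I would check that everything is well-typed: \(F\) must be defined on \(\im \iota\), and indeed \(\varphi\) has domain \(\im \iota\) by assumption, so \(F = j \circ \varphi\) has the correct domain \(\im \iota\) and codomain \(Z\). I do not anticipate any real obstacle here; the argument is a direct two-line construction. The only subtlety worth flagging is that the step passing from \(\varphi(\iota(x))\) to \(x\) invokes the invariance of \(j\) (not of \(\iota\)), so the hypothesis that \(j\) is itself an invariant is essential and must be stated clearly.
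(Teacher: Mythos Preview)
Your proposal is correct and follows exactly the same approach as the paper: define \(F := j \circ \varphi\) and verify \(j = F \circ \iota\) using the representer property together with the invariance of \(j\). You simply spell out the verification that the paper leaves as ``one can check''.
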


\begin{proof} 
    Let \(\varphi\) be a representer for \(\iota\) and let \(j\) be any other invariant.
    Now consider \(F = j \circ \varphi\).
    One can check that \(j = F \circ \iota\), thus showing that \(\iota\) is universal.
    \qed
\end{proof}

\begin{remark}
    Even with a universal collection of invariants, we cannot perfectly represent arbitrary invariants in practice, as expressivity is constrained by the chosen architecture. 
    For instance, if our invariants are continuous and we use multi-layer perceptrons with continuous activation functions to parametrize the kernels, we can only represent continuous invariants, meaning we exclude any with discontinuities.
\end{remark}

\begin{remark} \label{rem:bekkers_not_complete}
    We can show that the invariants \eqref{eq:invariants_ponita} used in \cite{bekkers2024fast} are not universal as follows.
    Let \(e_1, e_2, e_3\) be the standard basis of \(\bbR^3\).
    Consider the following two pairs of position-orientations: \((p_1,p_2) = ((0,e_3),(e_1,e_2))\) and \((q_1,q_2) = ((0,e_3),(e_1,e_1))\).
    As per \eqref{eq:invariants_ponita}, we calculate their invariants to be
    \(\iota_1(p_1,p_2) = \iota_1(q_1,q_2) = 0\), \(\iota_2(p_1,p_2) = \iota_2(q_1,q_2) = 1\), and \(\iota_3(p_1,p_2) = \iota_3(q_1,q_2) = \pi/2\).
    We see that all invariants agree between the two pairs.
    Consider the invariant \(\iota = (x_2-x_1) \cdot n_2\).
    We calculate that \(\iota(p_1,p_2)=0\) and \(\iota(q_1,q_2)=1\), so \(\iota\) can not be a function of \(\iota_1\), \(\iota_2\), and \(\iota_3\).
    Thereby, we conclude that \eqref{eq:invariants_ponita} is not an universal collection of invariants.
\end{remark}

\section{Dependence} \label{sec:dependence}

In this section, we will define what it means for a collection of invariants to be (in)dependent.
We will show in \Cref{res:submersion} that a collection of invariants that form a submersion is independent, a result we will later use to prove that our collection of invariants \eqref{eq:invariants_ours} is independent.

\begin{definition}[Dependent] \label{def:dependent}
    A collection of scalar invariants \(\iota_1, \dots, \iota_n : X \to \bbR\) is \emph{dependent} over \(X\) if one of them can be written as a function of the others on \(X\), and \emph{independent} otherwise.
\end{definition}

If a collection of invariants is dependent on a set \(X\) then it is also dependent on any subset \(X' \subseteq X\).
Contrapositively, if a collection of invariants is independent on a set \(X'\) it is also independent on any superset \(X \supseteq X'\).

This observation raises an issue in our definition of independence: it is easy to create a ``large'' set on which the invariants are technically independent; one only needs to include a ``small'' subset on which this is the case.  
To remedy this we introduce the following stronger definition.

\begin{definition}[Somewhere Dependent]
    Let \(X\) be a topological space and \(\iota : X \to \bbR^n\) a collection of scalar invariants.
    We say \(\iota\) is \emph{somewhere dependent} if there exists an open subset \(U \subseteq X\) such that \(\iota\) is dependent on \(U\), and \emph{everywhere independent} otherwise.
\end{definition} 

It might seem hard to prove that a collection of scalar invariants is everywhere independent. 
However, if we specialize our setting by considering spaces that are differentiable manifolds and invariants that are differentiable mappings, we have the following simpler sufficient condition.

\begin{lemma} \label{res:submersion}
    Let \(M\) be a differentiable manifold and \(\iota : M \to \bbR^n\) a collection of differentiable scalar invariants that form a \emph{submersion}, that is the differential \(d\iota|_p : T_pM \to \bbR^n\) is a surjective linear map at every point \(p \in M\).
    Then \(\iota\) is everywhere independent.
\end{lemma}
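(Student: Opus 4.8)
The plan is to prove the contrapositive: if $\iota$ is somewhere dependent, then $\iota$ fails to be a submersion at some point. So suppose there is an open set $U \subseteq M$ on which, after relabeling, $\iota_n = h(\iota_1, \dots, \iota_{n-1})$ for some function $h$ on $U$. The key idea is that this functional dependence forces a linear dependence among the differentials $d\iota_1, \dots, d\iota_n$ at every point of $U$, which in turn means the combined differential $d\iota = (d\iota_1, \dots, d\iota_n)$ cannot be surjective onto $\bbR^n$ there.

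First I would fix a point $p \in U$ and apply the chain rule to $\iota_n = h(\iota_1, \dots, \iota_{n-1})$. Differentiating at $p$ gives, for every tangent vector $v \in T_p M$,
\begin{equation}
    d\iota_n|_p(v) = \sum_{i=1}^{n-1} \frac{\partial h}{\partial y_i} \, d\iota_i|_p(v),
\end{equation}
where the partial derivatives of $h$ are evaluated at $(\iota_1(p), \dots, \iota_{n-1}(p))$ and are thus fixed real scalars $c_i := \partial h / \partial y_i$. This exhibits the last component $d\iota_n|_p$ of the differential as a fixed linear combination of the other components $d\iota_1|_p, \dots, d\iota_{n-1}|_p$, as linear functionals on $T_p M$. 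Consequently the image of $d\iota|_p$ is contained in the hyperplane $\{ w \in \bbR^n \mid w_n = \sum_{i} c_i w_i \}$, which is a proper subspace of $\bbR^n$. Hence $d\iota|_p$ is not surjective, so $\iota$ is not a submersion at $p$, contradicting the hypothesis. This proves everywhere independence.

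The main obstacle is a regularity gap: the definition of \emph{dependent} (\Cref{def:dependent}) only asks that $\iota_n$ be \emph{some} function $h$ of the others, with no differentiability assumed, yet the chain-rule step above needs $h$ to be differentiable at the relevant point. I expect to bridge this by noting that on $U$ the map $h$ agrees with $\iota_n \circ s$, where $s$ is a local section of $(\iota_1, \dots, \iota_{n-1})$; more robustly, one can argue directly at the level of orbit structure without ever writing $h$ explicitly. Concretely, surjectivity of $d\iota|_p$ implies by the local submersion theorem that $\iota$ is an open map near $p$, so $\iota(U)$ contains an open subset of $\bbR^n$; but the relation $\iota_n = h(\iota_1,\dots,\iota_{n-1})$ constrains $\iota(U)$ to lie on the graph of $h$, a set with empty interior in $\bbR^n$. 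This contradiction avoids differentiating $h$ altogether and is the cleaner route; I would therefore carry out the proof via the open-mapping consequence of the submersion property rather than a naive chain-rule computation, since the former sidesteps the differentiability-of-$h$ subtlety entirely.
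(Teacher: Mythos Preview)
Your proposal is correct, and the route you ultimately settle on --- that a submersion is an open map, so $\iota(U)$ is open in $\bbR^n$, whereas a functional relation would force $\iota(U)$ to lie in a graph with empty interior --- is exactly the paper's proof. The chain-rule detour and its self-diagnosed regularity gap are unnecessary, but you recognized this yourself and landed on the same argument the authors give.
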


\begin{proof} 
    A submersion is an \emph{open map} \cite[Prop.4.28]{lee2012introduction}, meaning that it maps open sets to open sets.
    So, given any arbitrary open subset \(U \subseteq M\) the image \(\iota(U) \subseteq \bbR^n\) is open.
    There is no functional relation between all elements of an open subset in \(\bbR^n\) (such an open set is never the graph of a function), thus \(\iota\) is independent on \(U\).
    As \(U\) was arbitrary, this shows that \(\iota\) is everywhere independent. 
    \qed
\end{proof}

\begin{remark}
    In practice, constructing a network using a dependent collection of invariants poses no issues, as the redundancy does not hinder training and may even enhance stability.
    However, proving the independence of a universal collection remains valuable, as it implies that removing \emph{any} invariant renders the collection \emph{non}-universal: the invariant that is removed can not be written as a function of the remaining invariants.
\end{remark}

\section{A Universal and Independent Collection of Invariants} \label{sec:our_invariants}

\begin{definition}[The Invariants] \label{def:our_invariants}
    Write \(p_1=(x_1,n_1)\), \(p_2=(x_2,n_2) \in \bbM_3\).
    We define the following smooth functions \(\iota_i : \bbM_3 \times \bbM_3 \to \bbR\):
    \begin{equation} \label{eq:invariants_ours}
    \begin{array}{lll}
        \iota_1(p_1,p_2) &= (x_2 - x_1) \cdot n_1, \\
        \iota_2(p_1,p_2) &= (x_2 - x_1) \cdot n_2,\\
        \iota_3(p_1,p_2) &= (x_2 - x_1) \cdot (x_2 - x_1), \\ 
        \iota_4(p_1,p_2) &= n_1 \cdot n_2.
    \end{array}
    \end{equation}
\end{definition}

We will prove that 1) the mappings \(\iota_i\) are invariants, 2) they form a universal collection, and 3) that they are independent everywhere.
These results are collected in the following theorem.

\begin{theorem} \label{res:our_invariants_are_cool}
    The functions \eqref{eq:invariants_ours} form a universal and everywhere independent collection of \(\E(3)\) invariants on \(\bbM_3 \times \bbM_3\).
\end{theorem}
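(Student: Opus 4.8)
The statement bundles three claims, which I would establish in the order invariance, universality, independence.

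\emph{Invariance.} This is the routine part. For $g=(t,Q)\in\E(3)$ the difference $w:=x_2-x_1$ transforms to $Qw$ (the translation $t$ cancels in the difference), while $n_1,n_2$ transform to $Qn_1,Qn_2$. Since each $\iota_i$ is an inner product of two vectors drawn from $\{w,n_1,n_2\}$, and $Q^{\tr}Q=I$ means $Q$ preserves inner products, each $\iota_i$ is unchanged under $g$. I would simply write this one-line check out for all four functions.

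\emph{Universality.} By \Cref{res:representer_universal_complete} it suffices to produce a representer $\varphi:\im\iota\to\bbM_3\times\bbM_3$, i.e. to show the four numbers $(a_1,a_2,a_3,a_4)=\iota(p_1,p_2)$ determine the pair up to $\E(3)$. I would do this by explicit canonicalization: translate so that $x_1=0$, rotate so that $n_1=e_3$, and spend the residual rotation about $e_3$ to place the part of $w=x_2-x_1$ orthogonal to $e_3$ along $+e_1$. Then the $e_3$-component of $x_2$ is $a_1$, and since $a_1^2\le a_3$ on $\im\iota$ (Cauchy--Schwarz), the orthogonal part has length $\sqrt{a_3-a_1^2}$, so $x_2=(\sqrt{a_3-a_1^2},0,a_1)$ is fixed by the data. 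Writing $n_2=(u,v,a_4)$, its $e_3$-component is $\iota_4=a_4$, its first component $u$ is pinned by $\iota_2=x_2\cdot n_2$, and $v$ is determined up to sign by $\|n_2\|=1$ (the required inequalities hold automatically on $\im\iota$, since the tuple comes from a genuine pair). A reflection across the $e_1e_3$-plane -- an element of $\text{O}(3)$ fixing both $p_1=(0,e_3)$ and $x_2$ -- removes this sign ambiguity. Thus the canonical pair depends only on $(a_1,\dots,a_4)$, giving $\varphi$. I would treat the degenerate locus $a_3=a_1^2$ (where $w\parallel e_3$, so the residual rotation must instead be used to orient $n_2$) as a separate case; as a representer need not be continuous, a piecewise definition is admissible.

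\emph{Independence.} Here I would use \Cref{res:submersion}, but with an essential caveat: $\iota$ is \emph{not} a submersion on all of $\bbM_3\times\bbM_3$, since $d\iota_3=2\,w\cdot\dot w$ vanishes identically wherever $x_1=x_2$. I would therefore restrict to the open set $M'$ on which $n_1,n_2,w$ are linearly independent. A short covector computation gives submersion there: if $\sum_i\lambda_i\,d\iota_i=0$ on $T_p(\bbM_3\times\bbM_3)$, then freeness of $\dot x_1\in\bbR^3$ forces $\lambda_1 n_1+\lambda_2 n_2+2\lambda_3 w=0$, whence $\lambda_1=\lambda_2=\lambda_3=0$ by independence; the remaining relation $\lambda_4\,d\iota_4=0$ then gives $\lambda_4=0$ because $d\iota_4\neq0$ as a covector when $n_1\not\parallel n_2$. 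So $\iota|_{M'}$ is a submersion, hence everywhere independent on $M'$ by \Cref{res:submersion}. Finally I would promote this to the whole space using that $M'$ is open and dense (its complement is the nowhere-dense zero set of the relevant Gram determinant): any nonempty open $U\subseteq\bbM_3\times\bbM_3$ meets $M'$, and since a submersion is an open map, $\iota(U)\supseteq\iota(U\cap M')$ contains a nonempty open set and so is not a graph, proving $\iota$ independent on every open $U$, i.e. everywhere independent.

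The two genuinely nontrivial points -- and where I expect the main obstacle -- are: in universality, recognizing that it is precisely the $\text{O}(3)$ reflection that eliminates the sign of $v$, which explains why four invariants suffice for $\E(3)$ though they would not for $\text{SE}(3)$; and in independence, that the submersion property fails on the degenerate locus, so one cannot apply \Cref{res:submersion} verbatim and must instead restrict to the dense open $M'$ and invoke openness of submersions.
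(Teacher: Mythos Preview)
Your proof is correct, and the invariance and independence parts match the paper's argument essentially line for line: the paper restricts to the same dense open set where $n_1,n_2,x_2-x_1$ are linearly independent, runs the same covector computation, and (implicitly) uses density to promote to the whole space, just as you do.

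The universality argument is where you diverge. You canonicalize explicitly: translate $x_1$ to the origin, rotate $n_1$ to $e_3$, use the residual $S^1$ and the $O(3)$ reflection to pin down the remaining data, and handle the degenerate locus $a_3=a_1^2$ separately. The paper instead observes that the three vectors $v_1=n_1$, $v_2=n_2$, $v_3=x_2-x_1$ have a Gram matrix whose entries are exactly $1,1,\iota_3,\iota_4,\iota_1,\iota_2$, and invokes the standard fact that two tuples of vectors in $\bbR^3$ share a Gram matrix if and only if they are related by some $Q\in\text{O}(3)$. This buys uniformity: no case split is needed, and the role of the full orthogonal group (your reflection) is absorbed into the Gram-matrix characterization rather than appearing as a separate step. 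Your approach, on the other hand, is more concrete and makes the geometric picture (and the $\text{SE}(3)$ vs.\ $\E(3)$ distinction you highlight) very explicit. Both are valid constructions of a representer in the sense of \Cref{def:universality_representer}.
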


\begin{proof} 
    We will begin by proving that the functions are invariants, then demonstrate their universality, and finally show that they are independent everywhere.

    \textbf{Invariance:}
    To prove that these are invariants we must show that \(\iota(p_1, p_2)=\iota(g p_1, g p_2)\) for all pairs of position-orientations \((p_1, p_2) \in \bbM_3 \times \bbM_3\) and all rigid transformations \(g=(t,Q) \in \E(3)\).
    This is a straightforward calculation using that \((Q x) \cdot (Q y) = x \cdot y\) 
    for all \(x,y \in \bbR^3\) and orthogonal matrices \(Q \in \text{O}(3)\).

    \textbf{Universality:}
    Let \((p_1=(x_1,n_1)\), \(p_2=(x_2,n_2) \in \bbM_3\) be a pair of position-orientations. 
    From the values of the invariants \(\iota_1(p_1, p_2), \dots, \iota_4(p_1,p_2)\) we will construct another pair of position-orientations \((\bar p_1, \bar p_2)\) such that \((\bar p_1, \bar p_2) \sim (p_1,p_2)\). 
    The described construction will act as our representer, thus showing that the collection of invariants is universal, as per  \Cref{res:representer_universal_complete}.

    Consider the vectors \(v_1 = n_1, v_2 = n_2, v_3 = x_2 - x_1\). 
    Their Gram matrix is
    \begin{equation}
        v_i \cdot v_j = \left[\begin{matrix}
            1 & \iota_4 & \iota_1 \\
            \iota_4 & 1 & \iota_2 \\
            \iota_1 & \iota_2 & \iota_3 
        \end{matrix}\right]_{ij}, \textrm{ with }\iota_k=\iota_k(p_1,p_2), 
    \end{equation}
    meaning that we have access to this Gram matrix purely from the values of the invariants.
    Let \(\bar v_1, \bar v_2, \bar v_3 \) be \emph{any} list of vectors with the same Gram matrix.
    Lists of vectors with the same Gram matrix are related by an orthogonal matrix \(Q\), i.e. \(Q \bar v_i = v_i\).
    Now pick \emph{any} position \(\bar x_1\), and define \(\bar n_1 = \bar v_1\), \(\bar n_2 = \bar v_2\), and \(\bar x_2 = \bar x_1 + \bar v_3\).
    We check that the rigid transformation \(g = (- Q \bar x_1 + x_1, Q) \in \E(3)\) maps \((\bar p_1, \bar p_2)\) to \((p_1, p_2)\), showing that \((\bar p_1, \bar p_2) \sim (p_1, p_2)\).  For example
    \begin{equation}
    \begin{split}
        g \act \bar{p}_2 
        &= (- Q \bar x_1 + x_1 + Q \bar{x}_2, Q \bar{n}_2) 
        = (Q(\bar{x}_2 - \bar{x}_1) + x_1, Q \bar{n}_2)\\
        &= (Q \bar{v}_3 + x_1, Q \bar{v}_2)
        = (v_3 + x_1, v_2)
        = ((x_2-x_1)+x_1, n_2)
        = p_2,
    \end{split}
    \end{equation}
    and similarly one finds $g \act \bar{p}_1=p_1$.
    
    \textbf{Independence:}
    To prove that this collection of invariants is everywhere independent we will show that the differentials \(d \iota_i|_p : T_p(\bbM_3 \times \bbM_3) \to \bbR\) are linearly independent on the following dense and open subset \(U\) of \(\bbM_3 \times \bbM_3\):
    \begin{equation}
        U := \{(p_1, p_2) \in \bbM_3 \times \bbM_3 \mid x_2 - x_1, \ n_1, \text{ and } n_2 \text{ form a basis of } \bbR^3\},
    \end{equation}
    where \(p_1=(x_1,n_1)\), \(p_2=(x_2,n_2) \in \bbM_3\).
    This shows that \(d \iota|_p\) is surjective everywhere on \(U\), meaning that \(\iota\) is a submersion from \(U\) to \(\bbR^4\), and therefore everywhere independent, as per \Cref{res:submersion}.  
    In short, it suffices to show that $d \iota_i|_p$ are independent on \(U\), which we will do next.
    
    Let \(\dot p_1=(\dot x_1, \dot n_1) \in T_{p_1}\bbM_3\), \(\dot p_2=(\dot x_2, \dot n_2) \in T_{p_2}\bbM_3\).
    The differentials \(d\iota_i|_p\) are
    \begin{equation}
    \begin{split}
        d\iota_1|_p(\dot p_1, \dot p_2) &= (\dot x_2 - \dot x_1) \cdot n_1 + (x_2 - x_1) \cdot \dot n_1, \\
        d\iota_2|_p(\dot p_1, \dot p_2) &= (\dot x_2 - \dot x_1) \cdot n_2 + (x_2 - x_1) \cdot \dot n_2, \\
        d\iota_3|_p(\dot p_1, \dot p_2) &= 2 (\dot x_2 - \dot x_1) \cdot (x_2 - x_1) , \\
        d\iota_4|_p(\dot p_1, \dot p_2) &= \dot n_1 \cdot n_2 + n_1 \cdot \dot n_2.
    \end{split}
    \end{equation}
    Suppose that we could find coefficients \(c^i \in \bbR\) such that \(\sum_{i=1}^4 c^i d \iota_i|_p = 0\) (that being for \emph{all} tangents \(\dot p_1\), \(\dot p_2\)).
    Specifically, take \(\dot x_1 = \dot n_1 = \dot n_2 = 0\).
    We find \((c^1 n_1 + c^2 n_2 + 2 c^3 (x_2 - x_1) ) \cdot \dot x_2 = 0\) for \emph{all} \(\dot x_2\).
    This implies that \(c^1 n_1 + c^2 n_2 + 2 c^3 (x_2 - x_1) = 0\).
    Because we have assumed that these three vectors form a basis, we get that \(c^1 = c^2 = c^3 = 0\).
    Now consider \(\dot x_1 = \dot x_2 = \dot n_2 = 0\).
    We find \((c^1 (x_2 - x_1) + c^4 n_2) \cdot \dot n_1 = 0\) for \emph{all} \(\dot n_1\) with \(\dot n_1 \cdot n_1 = 0\) (See \Cref{def:pos_ori_space}).
    This implies that \(c^1 (x_2 - x_1) + c^4 n_2\) is in the span of \(n_1\).
    Again, because we have assumed that these three vectors form a basis, we have to conclude that \(c^1 = c^4 = 0\).
    All in all, we find that \(\sum_{i=1}^4 c^i d \iota_i|_p = 0\) implies that all \(c^i = 0\), thus showing that the differentials are linearly independent.
    \qed
\end{proof}

\section{Experiments} \label{sec:experiments}

By \Cref{rem:bekkers_not_complete}, the invariants \eqref{eq:invariants_ponita} used in \cite{bekkers2024fast} are not universal, which may limit PONITA's expressivity and  accuracy in specific applications. 
In contrast, the invariants we propose \eqref{eq:invariants_ours} are universal, suggesting that, in theory, a performance difference between the two collections of invariants should be observable.


For this reason we performed some experiments on the QM9 dataset \cite{ramakrishnan2014quantum,ruddigkeit2012enumeration}, specifically predicting chemical properties of various molecules (134k stable small organic molecules).
We choose to discretize with \(16\) orientations, use \(6\) layers, \(128\) dimensional features, and train for \(800\) epochs.
All other model settings and hyperparameters are kept the same as in \cite[App.E.2]{bekkers2024fast}.
We report the mean absolute error (MAE) on the test set with the model that did best on the validation set during training.
The results can be found in \Cref{tab:ponita_qm9}.

\definecolor{good}{RGB}{54, 176, 60}
\definecolor{bad}{RGB}{176, 74, 110}

We see that the universal collection of invariants we propose \eqref{eq:invariants_ours} outperforms the non-universal invariants suggested in \eqref{eq:invariants_ponita} on 10 of the 12 targets we experimented on, with an average improvement of \(\color{good}-14.4\%\).

On some targets (\(\langle R^2 \rangle\), ZPVE, and \(U_0\)) the change in accuracy is less pronounced, in two cases even deteriorating.
One explanation is that a universal collection may be unnecessary for these targets, meaning the Bekkers et al. invariants are already sufficiently expressive. 
The remaining variability in accuracy can be attributed to initialization and training.

On other targets (\(U\), \(H\), and \(G\)) the improvement in accuracy is stark.
We can identify this as a direct consequence of using a universal collection of invariants: these targets cannot be accurately predicted using the incomplete invariants of Bekkers et al.

\begin{table}[ht]
    \centering
    \def\arraystretch{1.2} 
    \setlength{\tabcolsep}{0.5em} 
    \begin{tabular}{ll|rrr}
        Target & Unit & Bekkers et al. \eqref{eq:invariants_ponita} & Universal (Ours) \eqref{eq:invariants_ours} & Difference \%\\
        \hline
        \(\mu\) & D & 0.0195 & 0.0166 & \(\color{good}-15.0\)\\
        \(\alpha\) & \(a_0^3\) & 0.0557 & 0.0489 & \(\color{good}-12.1\)\\
        \(\varepsilon_\text{homo}\) & eV & 0.0226 & 0.0202 & \(\color{good}-10.4\)\\
        \(\varepsilon_\text{lumo}\) & eV & 0.0206 & 0.0187 & \(\color{good}-9.0\)\\
        \(\Delta \varepsilon\) & eV & 0.0415 & 0.0378 & \(\color{good}-8.9\)\\
        \(\langle R^2 \rangle\) & \(a_0^2\) & 0.4160 & 0.4251 & \(\color{bad}+2.2\)\\
        ZPVE & meV & 1.5647 & 1.5241 & \(\color{good}-2.6\)\\
        \(U_0\) & eV & 0.9920 & 1.0285 & \(\color{bad}+3.7\)\\
        \(U\) & eV & 1.3593 & 0.7362 & \(\color{good}-45.8\)\\
        \(H\) & eV & 1.0205 & 0.6934 & \(\color{good}-32.1\)\\
        \(G\) & eV & 1.1856 & 0.7721 & \(\color{good}-34.9\)\\
        \(c_v\) & \(\frac{\mathrm{cal}}{\mathrm{mol} \, \mathrm{K}}\) & 0.0292 & 0.0270 & \(\color{good}-7.4\)
    \end{tabular}
    \caption{
        PONITA trained to predict chemical properties of various molecules (QM9 dataset).
        Mean absolute error on the test set is reported (lower is better).
        Our universal invariants perform better than the invariants used in \cite{bekkers2024fast}.
    }
    \label{tab:ponita_qm9}
\end{table}

\section{Conclusion} \label{sec:conclusion}

In \Cref{def:our_invariants} we introduced a collection of four smooth \(\E(3)\) invariants \(\iota_i : \bbM_3 \times \bbM_3 \to \bbR\).
We proved in \Cref{res:our_invariants_are_cool} that this collection is both independent and universal, meaning that all invariants are pertinent, and any other invariant is a function of them.

In \Cref{sec:experiments} we performed an experimental comparison between our collection of invariants \eqref{eq:invariants_ours} and those proposed in Bekkers et al. \cite{bekkers2024fast}, as defined in \eqref{eq:invariants_ponita}. 
We did this by training the PONITA architecture to predict chemical properties of various molecules (QM9 dataset \cite{ramakrishnan2014quantum,ruddigkeit2012enumeration}).
We observe improvements in accuracy as a result of using a universal set of invariants.

\textbf{Availability of Code.} 
All code can be found at 
\if\anonymous0
\url{https://gitlab.com/gijsbel/ponita_invariants}.
\else
\url{https://anonymized.url}.
\fi

\textbf{Acknowledgments.} 
{\if\anonymous0
The Dutch Research Council (NWO) is gratefully acknowledged for financial support via VIC.202.031 (Duits, Geometric learning for Image Analysis \url{https://www.nwo.nl/en/projects/vic202031}).
The European Union is gratefully acknowledged for financial support through project REMODEL (Horizon Europe, MSCA-SE, 101131557 \url{https://doi.org/10.3030/101131557}).
\fi}
We thank Bekkers et al. \cite{bekkers2024fast} for their publicly available PONITA architecture \url{https://github.com/ebekkers/ponita}.

\textbf{Addendum.} 
We thank Johan Edstedt for bringing \cite[Eq.2,Prop.1]{deng2018ppffold} to our attention. 
It presents the same invariants and a similar proof of their universality. 
Unfortunately, we were not aware of this paper at the time of publication.


%
%
%
%

\bibliographystyle{splncs04}
\bibliography{bibliography.bib}

\newpage

\end{document}